\newtheorem{thm}{Theorem}[section]
\newtheorem{cor}[thm]{Corollary}
\theoremstyle{definition} 
\theoremstyle{question} 
\theoremstyle{remark} 
\newcommand*{\rom}[1]{\expandafter\@slowromancap\romannumeral #1@}
\def\bege{\begin{equation}} \def\ende{\end{equation}}
   \def\begr{\begin{eqnarray}}
\def\endr{\end{eqnarray}} 
\def\bege{\begin{equation}} \def\ende{\end{equation}}
\def\begr{\begin{eqnarray}} \def\endr{\end{eqnarray}} \def\bnum{\begin{enumerate}} \def\enum{\end{enumerate}}
\begin{document}
\begin{center}
\textbf{On Fault-Tolerant Resolvability of Double Antiprism and its related Graphs}
\end{center}

\begin{center}
Sunny Kumar Sharma$^{1}$ and Vijay Kumar Bhat$^{2,}$$^{\ast}$
\end{center}

\begin{center}
School of Mathematics, Faculty of Sciences, Shri Mata Vaishno
Devi University,\end{center}\begin{center}Katra-$182320$, Jammu and
Kashmir, India.
\end{center}

\begin{center}
  $^{1}$sunnysrrm94@gmail.com, $^{2}$vijaykumarbhat2000@yahoo.com
\end{center}
\textbf{Abstract}
For a connected graph $\Gamma=(V,E)$, a subset $R$ of ordered vertices in $V$ is said to be a resolving set in $\Gamma$, if the vector of distances to the vertices in $R$ is unique for each $u^{i}\in V(\Gamma)$. The metric dimension of $\Gamma$ is the minimum cardinality of such a set $R$. If $R\setminus \{u^{i}\}$ is still a resolving set $\forall$ $u^{i}\in R$, then $R$ is called a fault-tolerant resolving set (FTRS) for $\Gamma$ and its least cardinality is the fault-tolerant metric dimension (FTMD) of $\Gamma$. In this article, we introduce the concept of an independent fault-tolerant resolving set (IFTRS) and investigate it for several well-known graphs. We also show that the FTMD is four for three closely related families of convex polytopes available in the literature (viz., double antiprism $\mathbb{A}_{n}$, $S_{n}$, and $T_{n}$).  \\\\
\textbf{Keywords:} Double antiprism graph, fault-tolerant resolvability, plane graph, resolving set.\\\\
\textbf{MSC(2020):} 05C12, 68R01, 68R10.\\\\
$^{\ast}$ Corresponding author.\\\\
\textbf{{\Large $1$. Introduction and Preliminaries}}\\\\
Let $\Gamma=(V,E)$ be a non-trivial, undirected, simple and connected graph with the edge set $E(\Gamma)$ and the vertex set $V(\Gamma)$. The distance between two vertices $u^{1}, u^{2} \in V$, denoted by $d(u^{1}, u^{2})$, is the length of a shortest $u^{1}-u^{2}$ path in $\Gamma$. The number of edges that are incident to a vertex $u$ in $\Gamma$, denoted by $d_{u}$, is the degree of that vertex. For $n\geq3$, $P_{n}$, $C_{n}$, and $K_{n}$ denote the path graph, the cycle graph, and the complete graph respectively. Let $R=\{u^{1}, u^{2}, u^{3},...,u^{t}\}$ be an ordered set of vertices in $\Gamma$ and $u\in V$. Then, the co-ordinate (or code) ${\ss}(u|R)$ of $u$ with respect to $R$ is the $t$-tuple $(d(u, u^{1}), d(u, u^{2}), d(u, u^{3}),...,d(u, u^{t}))$. The subset $R$ of $\Gamma$ is said to be a resolving set (or metric generator) for $\Gamma$, if ${\ss}(u^{2}|R) \neq {\ss}(u^{1}|R)$, for any pair of vertices $u^{1} \neq u^{2} \in V(\Gamma)$. A metric basis (MB) for $\Gamma$ is a resolving set (RS) with a minimum number of elements, and the cardinality of the MB is called the metric dimension (MD) of $\Gamma$, denoted by $dim(\Gamma)$.\vspace{0.7mm}

The concept of MD was introduced independently by Slater \cite{ps} and Harary \& Melter \cite{fr}. Since then, the problem of MD has received a lot of attention. The notions of RS and MD have proven useful in various areas such as robotic navigation, the structure of chemical compounds, combinatorial optimization, image processing \& pattern recognization, connected joins in networks, pharmaceutical chemistry, game theory, etc. for these see \cite{ns, srb, mt, sv}.\vspace{0.7mm}

Hernando et al. \cite{hck} computed the MD of Fan graph $F_{n}$ ($n\geq7)$, and proved that $dim(F_{n})=\lfloor \frac{2n+2}{5}\rfloor$. In \cite{bck}, Buczkowski et al. proved that for Wheel graph $W_{n}$ ($n\geq7)$, the MD is $\lfloor \frac{2n+2}{5}\rfloor$. Tomescu and Javaid \cite{tck} proved that $dim(J_{2n})=\lfloor \frac{2n}{3}\rfloor$, for the Jahangir graph $J_{2n}$ ($n\geq4$). After that, the MD for several well-known graphs have been investigated such as prism graph, convex polytopes, antiprism graph, unicyclic graph, Petersen graph, flower graph, some regular graphs, etc. see \cite{itm, co, sv} and references therein.\vspace{0.7mm}

Recent advancements in the definition of MD have paved the path for a new related concept called fault tolerance. Assume that a network has $n$-processing elements (or units). To create a self-stable fault-tolerant system, it is necessary that if any single unit fails (or crashes), another chain of units (with the exception of the faulty element) will replace the initial (or original) chain. As a result, the nature of this fault tolerance enables the machine to keep operating, possibly at a reduced pace, rather than crashing completely.\vspace{0.7mm}

A RS $R_{f}$ is called fault-tolerant (FT) if $R_{f}\setminus{u^{i}}$ is also a RS, for all $u^{i}\in R_{f}$, and therefore the minimum cardinality of such $R_{f}$ is referred to as FTMD of $\Gamma$, denoted by $fdim(\Gamma)$. If $|R_{f}|=fdim(\Gamma)$, then $R_{f}$ is known as the $fault$-$tolerant$ $metric$ $basis$ (FTMB) for $\Gamma$. Fault-tolerant structures have been used successfully in engineering and computer science \cite{eck}. Slater initiated the study of fault-tolerant sets in \cite{sck}. In \cite{hn}, Hernando et al. proposed the idea of FTMD. They investigated the fault tolerance in trees and presented a significant result for FTMD in the form of an lower bound using MD, independent of graph choice, which is $dim(\Gamma)(1+2.5^{dim(\Gamma)-1})\leq fdim(\Gamma)$.\vspace{0.7mm}

The notion of FTMD is an interesting concept and has been studied by many researchers. The FTMD for $P_{n}$, $C_{n}$, and $K_{n}$ are as follows:\\\\
\textbf{Proposition 1.} {\it \cite{pck} For $n\geq3$, we have $fdim(P_{n})=2$, $fdim(C_{n})=3$, and $fdim(K_{n})=n$.}\\\\
In \cite{rz}, Raza et al. computed the FTMD of some classes of convex polytopes. Voronov in \cite{vv} investigated the FTMD of the king’s graph. For
more work on the FTMD, see references in \cite{acb, eck}. A subset $R^{i}$ of $V(\Gamma)$ is an independent set for $\Gamma$ if no two vertices in $R^{i}$ are adjacent.
The independence in RSs was first introduced by Chartrand et al. \cite{irs}. They characterize all connected non-trivial graphs $\Gamma$ of order $n$ with independent resolving numbers $1$, $n-2$, and $n-1$. \vspace{0.7mm}

Likewise resolving sets, in this work, we study the independence in FTRSs, and obtain it for some known graphs. We obtain the FTMD for three closely related classes of convex polytopes, viz., double antiprism $\mathbb{A}_{n}$, $S_{n}$, and $T_{n}$ \cite{ibs}. We locate FTRS of minimum cardinality in them. We conclude the article with some open problems regarding the independence of FTRSs. For the double antiprism $\mathbb{A}_{n}$, $S_{n}$, and $T_{n}$, Imran et al. in \cite{ibs}, proved the following:\\\\
\textbf{Proposition 2.}
{\it $dim(\mathbb{A}_{n})=dim(S_{n})=dim(T_{n})=3$, where $n\geq6$ is a positive integer.}\\

\hspace{-5.0mm}\textbf{{\Large $2$. Independent Fault-Tolerant Resolving Sets}}\\\\
Independent sets (or stable sets, ISs for short) in graphs are the most extensively studied concepts in graph theory. The $maximum$ $independent$ $sets$ (MISs) are ISs with maximum cardinality, and these ISs have received attention in the recent past. The $vertex$ $independence$ $number$ (or $independence$ $number$) of a graph $\Gamma$, denoted by $\beta(\Gamma)$, is the cardinality of MIS in $\Gamma$. There are also several ISs of minimum cardinality which are of interest with respect to theoretical and applied points of view.\vspace{0.7mm}

A $maximal$ $independent$ $set$ of vertices is an IS of vertices that are not properly contained in any other IS of vertices. The minimum cardinality of a maximal independent set is denoted by $i(\Gamma)$. This parameter is also known as the independent domination number because it has the smallest cardinality of an IS of vertices that dominates all the vertices of $\Gamma$. \vspace{0.7mm}

In \cite{irs}, Chartrand et al. explored the independence in resolving sets and provided some significant observations and results. We can see that some graphs consist of ISs $R^{i}_{f}$ with the property that $R^{i}_{f}-\{v^{i}\}$ is a RS for every $v^{i}$ in $R^{i}_{f}$. Therefore, this paper aims to detect the existence of such ISs in graphs and, if they exist, to study the minimum possible cardinality of such a set.\vspace{0.7mm}

An $independent$ $fault$-$tolerant$ $resolving$ $set$ (IFTRS) $R^{i}_{f}$ in a simple connected graph, $\Gamma$ is (1) independent and (2) FTRS. The cardinality of a minimum IFTRS in $\Gamma$ is the independent fault-tolerant resolving number, denoted by ifr$(\Gamma)$. Suppose $\Gamma$ be a non-trivial connected graph with $|V(\Gamma)|=n$, containing an IFTRS. Since every IFTRS is an FTRS, so it follows that
\begin{equation}\label{2}
 1+dim(\Gamma) \leq fdim(\Gamma) \leq ifr(\Gamma) \leq 1+\beta(\Gamma) \leq n
\end{equation}
\begin{center}
  \begin{figure}[h!]
  \centering
  \includegraphics[width=4.0in]{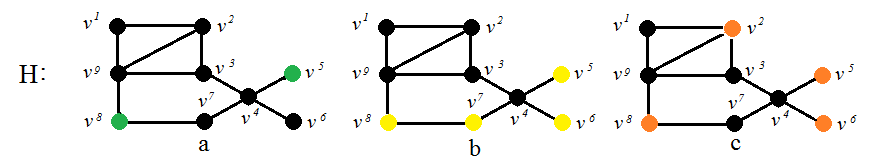}
  \caption{Graph $H$}\label{p4}
\end{figure}
\end{center}
To explain this concept, consider a graph $H$ in Fig. 1(a). The set $R=\{v^{5}, v^{8}\}$ (with green vertices) is a basis for $H$ and so $dim(H)=2$. Next, the set $R_{f}=\{v^{5}, v^{6}, v^{7}, v^{8}\}$ (with yellow vertices) in Fig. 1(b) is the FTRS set for $H$ and therefore $fdim(H)=4$. However, $R_{f}$ is not an IFTRS for $H$. The set $R^{i}_{f}=\{v^{2}, v^{5}, v^{6}, v^{8}\}$ (with orange vertices) in Fig. 1(c) is an IFTRS. Now, the co-ordinates of each vertex of $H$ with respect to $R^{i}_{f}$ are ${\ss}_{f}(v^{1}|R^{i}_{f})=(1,4,4,2)$, ${\ss}_{f}(v^{2}|R^{i}_{f})=(0,3,3,2)$, ${\ss}_{f}(v^{3}|R^{i}_{f})=(1,2,2,2)$, ${\ss}_{f}(v^{4}|R^{i}_{f})=(2,1,1,2)$, ${\ss}_{f}(v^{5}|R^{i}_{f})=(3,0,2,3)$, ${\ss}_{f}(v^{6}|R^{i}_{f})=(3,2,0,3)$, ${\ss}_{f}(v^{7}|R^{i}_{f})=(3,2,2,1)$, ${\ss}_{f}(v^{8}|R^{i}_{f})=(2,3,3,0)$, ${\ss}_{f}(v^{9}|R^{i}_{f})=(1,3,3,1)$. The codes with respect to the FTRS $R_{f}$ are called as the fault-tolerant metric codes (FTMC), denoted by ${\ss}_{f}(v^{j}|R_{f})$. A case-by-case analysis shows that $H$ contains no 3-element IFTRS (or FTRS) and so ifr$(H)=4$. The set $\{v^{1}, v^{3}, v^{5}, v^{6}, v^{7}\}$ is a MIS of $H$ and therefore $\beta(H)=5$. Thus the graph $H$ of Fig. 1 has $\beta(H)=5$, $dim(H)=2$, $fdim(H)=4$, and ifr$(H)=4$.\\

Likewise independent resolving set (IRS), all graphs do not have IFTRS, as a result, ifr$(\Gamma)$ is not defined for all graphs $\Gamma$. For example, the only ISs of the complete graph $K_{n}$; $n\geq 3$ are the singleton sets. Hence, ifr$(K_{n})$ is not defined for $n\geq 1$. In \cite{irs}, Chartrand et al. consider three regular graphs viz., the Petersen graph $P$, $K_{3,3}$, and $Q_{3}$ (see Fig. 2). For these graphs they found that, ir$(P)$=ir$(Q_{3})=3$ and for $K_{3,3}$, ir$(K_{3,3})$ does not exist.
\begin{center}
  \begin{figure}[h!]
  \centering
  \includegraphics[width=4.0in]{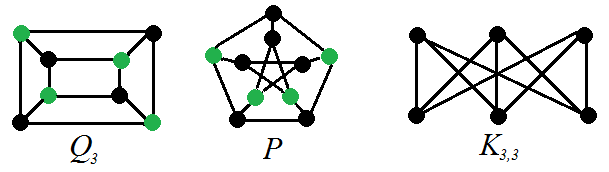}
  \caption{Regular Graphs}\label{p5}
\end{figure}
\end{center}

Similarly, for the IFTRS, we find that ifr$(K_{3,3})$ does not exist and ifr$(P)$=ifr$(Q_{3})=4$. The green vertices in Fig. 2 represent the minimum IFTRS for both $P$ and $Q_{3}$. Now, we have some results and observations regarding IRS and IFTRS.\\\\
\textbf{Proposition 3.} {\it $\Gamma=P_{n}$; $n\geq3$ iff  ifr$(P_{n})=2$.}
\begin{proof}
The proof is the same as for the FTMD of paths in \cite{hn}.
\end{proof}
\hspace{-5.0mm}Suppose $V(C_{n})= \{v^{1}, v^{2}, v^{3},...,v^{n}\}$ denotes the set of vertices in the cycle graph $C_{n}$. Then for IFTRS for $C_{n}$, we have:\\\\
\textbf{Proposition 4.} {\it For cycle graph $C_{n}$; $n\geq6$, we have ifr$(C_{n})=3$.}
\begin{proof}
Consider $R^{1}_{f}=\{v^{1}, v^{3}, v^{5}\}$ and $R^{2}_{f}=\{v^{1}, v^{3}, v^{6}\}$. Then, from Lemma 2 in \cite{cyc}, we find that, for $n\geq6$ and $n\neq8$, $R^{1}_{f}$ is the IFTRS for $C_{n}$ and $R^{2}_{f}$ is the IFTRS of $C_{n}$ for $n=8$. Therefore, ifr$(C_{n})=3$ for $n\geq6$.
\end{proof}
\hspace{-5.0mm}\textbf{Proposition 5.} {\it Every graph with IFTRS has IRS.}
\vspace{-5.0mm}
\begin{center}
  \begin{figure}[h!]
  \centering
  \includegraphics[width=3.8in]{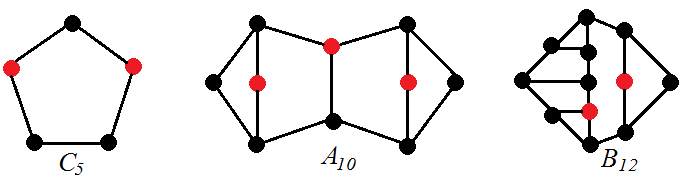}
  \caption{Graphs with IRS}\label{p5}
\end{figure}
\end{center}
By the definition of IFTRS, we see that proposition 5 is trivial. But the converse of the proposition 5 is not true. For example, suppose $C_{5}$, $A_{10}$, and $B_{12}$ are three graphs with $5$, $10$, and $12$ vertices, as shown in Fig. 3. We find that ir$(\Gamma)$ is defined for these three graphs (vertices in red color represent IRS), but ifr$(\Gamma)$ is not, where $\Gamma=C_{5}$, $A_{10}$, and $B_{12}$.\\

If $d(x,d)=d(x,c)$, $\forall$ $x\in V(\Gamma)- \{c,d\}$, then the vertices $d$ and $c$ are said to be $distance$ $similar$ (or distance equivalent) in $\Gamma$. Let $N(z)$ (open neighbourhood) be the set of vertices adjacent to $z$ in $\Gamma$, and let $N[z]=N(z)\cup \{z\}$ (closed neighbourhood). Then, in a non-trivial connected graph $\Gamma$, two vertices $x$ and $c$ are distance equivalent iff (1) $xc \in E(\Gamma)$ and $N[x]=N[c]$ or (2) $xc \notin E(\Gamma)$ and $N(x)=N(c)$. Moreover, the distance similarity is an equivalence relation on $V(\Gamma)$. Then we have the following observation.\\\\
\textbf{Observation 1.} {\it  In a connected graph $\Gamma$, if $D$ is a distance similar equivalence class with $|D|= w \geq 2$, then every FTRS of $\Gamma$ contains all the vertices from $D$.}\\

If $D$ is a distance similar equivalence class of $\Gamma$, then either the subgraph $\langle D\rangle$ induced by $D$ is complete in $\Gamma$ or $D$ is an independent set in $\Gamma$. Thus, we observe the following:\\\\
\textbf{Observation 2.} {\it Let $\Gamma$ be a graph and let $D$ with $|D| \geq 3$ be a distance similar equivalence class in $\Gamma$. Then ifr$(\Gamma)$ is not defined if $D$ is not independent in $\Gamma$.}\\

For observation 2, we find that the converse is not true. For instance, suppose $\Gamma=K_{3,3}$ with partite sets $A_{1}$ and $A_{2}$ (see Fig. 2). Then, we find that ifr$(\Gamma)$ is not defined. On the other side, $A_{1}$ and $A_{2}$ are the only two independent distance similar equivalence classes in $\Gamma=K_{3,3}$. Next, we determine the FTMD for three classes of convex polytopes.\\

\hspace{-5.0mm}\textbf{{\Large $3$. Minimum Fault-Tolerant Number of $\mathbb{A}_{n}$}}\\\\
In this section, we study some of the basic properties and the FTMD of $\mathbb{A}_{n}$.
The graph $\mathbb{A}_{n}$ consists of $3n$ vertices and $7n$ edges (see Fig. 4). The set of edges and vertices of $\mathbb{A}_{n}$ are denoted separately by $E(\mathbb{A}_{n})$ and $V(\mathbb{A}_{n})$, where $E(\mathbb{A}_{n})=\{j^{\bar{l}}_{r}j^{\bar{l}+1}_{r}, j^{\bar{l}}_{1}j^{\bar{l}}_{2}, j^{\bar{l}}_{2}j^{\bar{l}}_{3}, j^{\bar{l}}_{2}j^{\bar{l}+1}_{1}, j^{\bar{l}}_{3}j^{\bar{l}+1}_{2}:1 \leq r \leq 3; 1\leq \bar{l} \leq n\}$ and $V(\mathbb{A}_{n})=\{j^{\bar{l}}_{r}:1 \leq r \leq 3;1 \leq \bar{l} \leq n\}$.
\begin{center}
  \begin{figure}[h!]
  \centering
  \includegraphics[width=2.5in]{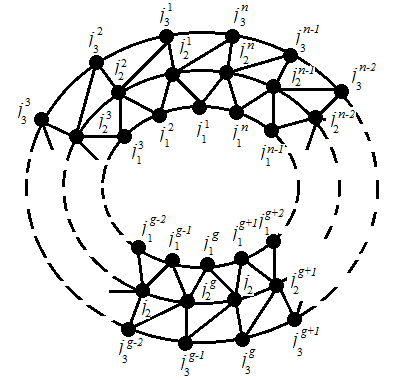}
  \caption{Double Antiprism $\mathbb{A}_{n}$}\label{p1}
\end{figure}
\end{center}
We call vertices $\{j^{\bar{l}}_{1}:1 \leq \bar{l} \leq n\}$, $\{j^{\bar{l}}_{2}:1 \leq \bar{l} \leq n\}$, and $\{j^{\bar{l}}_{3}:1 \leq \bar{l} \leq n\}$ as the inner, middle and outer cycle vertices in $\mathbb{A}_{n}$, respectively.  In the following result, we investigate the FTMD of $\mathbb{A}_{n}$.

\begin{thm}
$fdim(\mathbb{A}_{n})=4$, where $n\geq6$ is a positive integer.
\end{thm}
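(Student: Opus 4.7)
The plan is to prove the theorem by establishing both a lower bound of $4$ and a matching upper bound. The lower bound is immediate: Proposition 2 states that $dim(\mathbb{A}_{n}) = 3$, and the inequality chain displayed in (\ref{2}) gives $fdim(\mathbb{A}_{n}) \geq 1 + dim(\mathbb{A}_{n}) = 4$, so no further argument is needed on this side.

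For the upper bound, I would exhibit an explicit four-element subset $R_{f} \subset V(\mathbb{A}_{n})$ that is fault-tolerant resolving. Given the three-fold cyclic layering of $\mathbb{A}_{n}$ into inner, middle, and outer $n$-cycles together with the antiprism shortcut edges $j^{\bar{l}}_{2} j^{\bar{l}+1}_{1}$ and $j^{\bar{l}}_{3} j^{\bar{l}+1}_{2}$, a natural candidate is a small cluster of inner-cycle vertices of the form $R_{f} = \{j^{1}_{1}, j^{2}_{1}, j^{\alpha}_{1}, j^{\alpha+1}_{1}\}$ for a suitably chosen index $\alpha$ depending on $n$ (possibly with a separate choice in distinct residue classes of $n$). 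The underlying idea is that since Proposition 2 already provides a resolving set of size three from roughly two anchor positions, doubling each anchor yields a set from which the removal of any single vertex still leaves three anchors that resolve $\mathbb{A}_{n}$.

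The verification proceeds by tabulating the fault-tolerant metric codes ${\ss}_{f}(u \mid R_{f})$ for every $u \in V(\mathbb{A}_{n})$. Using the rotational symmetry, I would first compute the distances $d(j^{1}_{1}, j^{\bar{l}}_{r})$ for $r \in \{1,2,3\}$ and $1 \leq \bar{l} \leq n$, taking care that the antiprism edges create two competing shortest paths around each cycle; the distances from the other three anchors then follow by a simple index shift. After checking that all $3n$ codes are pairwise distinct, I would delete each element of $R_{f}$ in turn and confirm that the resulting three-coordinate codes still separate every pair of vertices, organised by which of the three cycles $u$ lies in.

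The main obstacle will be managing the modular arithmetic and the parity of $n$ cleanly: the threshold $n \geq 6$ in the hypothesis signals that small values break the uniform formulas, and care is needed to confirm that the chosen $\alpha$ yields distinct codes in every subcase (inner/middle/outer) and after every one of the four possible deletions. Once the distance table is organised by the pair $(\bar{l} \bmod n,\, r)$, the remaining comparisons reduce to routine checks, and the upper bound $fdim(\mathbb{A}_{n}) \leq 4$ follows, completing the proof.
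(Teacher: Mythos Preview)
Your proposal is correct and mirrors the paper's proof almost exactly: the lower bound via Proposition~2 and inequality~(\ref{2}), the explicit four-vertex set on the inner cycle consisting of two adjacent pairs at roughly antipodal positions, the parity split on $n$, and the verification by tabulating all codes and checking each of the four deletions. The only detail you would discover when carrying out the tabulation is that in the odd case $n=2\bar g+1$ the paper takes $R_{f}=\{j^{1}_{1},j^{2}_{1},j^{\bar g+1}_{1},j^{\bar g+3}_{1}\}$ rather than two consecutive anchors at the far side, a minor adjustment entirely consistent with the caveat you already flagged about parity.
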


\begin{proof}
To show that $fdim(\mathbb{A}_{n})\leq 4$, we consider the following two cases i.e., $n\equiv 0(mod\ 2)$ and $n\equiv 1(mod\ 2)$.\\
\textbf{Case(\rom{1})} $n\equiv 0(mod\ 2)$\\
For this, we set $n=2\bar{g}$, $\bar{g} \in \mathbb{N}$, and $\bar{g}\geq3$. Suppose $R_{f}=\{j^{1}_{1}, j^{2}_{1}, j^{\bar{g}+1}_{1}, j^{\bar{g}+2}_{1}\} \subseteq V(\mathbb{A}_{n})$. Next, we give FTMC to every vertex of $\mathbb{A}_{n}$ with respect to $R_{f}$. \\
For the vertices $\{j^{\bar{l}}_{1}:1\leq \bar{l} \leq n\}$, the FTMC are
\begin{center}
 \begin{tabular}{|m{14.0em}|c|c|c|c|}
 \hline
 ${\ss}_{f}(j^{\bar{l}}_{1}|R_{f})$                            & $j^{1}_{1}$ & $j^{2}_{1}$  & $j^{\bar{g}+1}_{1}$   & $j^{\bar{g}+2}_{1}$\\
 \hline
 ${\ss}_{f}(j^{\bar{l}}_{1}|R_{f})$:($\bar{l}=1$)              & $\bar{l}-1$    & $1$       & $\bar{g}-\bar{l}+1$   & $\bar{g}-1$ \\
 \hline
 ${\ss}_{f}(j^{\bar{l}}_{1}|R_{f})$:($2\leq \bar{l}\leq \bar{g}+1$)& $\bar{l}-1$ & $\bar{l}-2$ & $\bar{g}-\bar{l}+1$ & $\bar{g}-\bar{l}+2$  \\
 \hline
 ${\ss}_{f}(j^{\bar{l}}_{1}|R_{f})$:($\bar{l}=\bar{g}+2$)       & $2\bar{g}-\bar{l}+1$ & $\bar{l}-2$& $\bar{l}-\bar{g}-1$  & $\bar{g}-\bar{l}+2$  \\
 \hline
 ${\ss}_{f}(j^{\bar{l}}_{1}|R_{f})$:($\bar{g}+3\leq \bar{l}\leq 2\bar{g}$) & $2\bar{g}-\bar{l}+1$ & $2\bar{g}-\bar{l}+2$ & $\bar{l}-\bar{g}-1$   & $\bar{l}-\bar{g}-2$  \\
 \hline
 \end{tabular}
 \end{center}
For the vertices $\{j^{\bar{l}}_{2}:1\leq \bar{l} \leq n\}$, the FTMC are
\begin{center}
 \begin{tabular}{|m{14.0em}|c|c|c|c|}
 \hline
 ${\ss}_{f}(j^{\bar{l}}_{2}|R_{f})$                              & $j^{1}_{1}$  & $j^{2}_{1}$ & $j^{\bar{g}+1}_{1}$ & $j^{\bar{g}+2}_{1}$\\
 \hline
 ${\ss}_{f}(j^{\bar{l}}_{2}|R_{f})$:($\bar{l}=1$)                & $\bar{l}$    & $1$      & $\bar{g}-\bar{l}+1$    & $\bar{g}$ \\
 \hline
 ${\ss}_{f}(j^{\bar{l}}_{2}|R_{f})$:($2\leq \bar{l}\leq \bar{g}$) & $\bar{l}$    & $\bar{l}-1$    & $\bar{g}-\bar{l}+1$   & $\bar{g}-\bar{l}+2$  \\
 \hline
 ${\ss}_{f}(j^{\bar{l}}_{2}|R_{f})$:($\bar{l}=\bar{g}+1$)         & $2\bar{g}-\bar{l}+1$ & $\bar{l}-1$ & $\bar{l}-\bar{g}$ & $\bar{g}-\bar{l}+2$  \\
 \hline
 ${\ss}_{f}(j^{\bar{l}}_{2}|R_{f})$:($\bar{g}+2\leq \bar{l}\leq 2\bar{g}$)   & $2\bar{g}-\bar{l}+1$ & $2\bar{g}-\bar{l}+2$ & $\bar{l}-\bar{g}$   & $\bar{l}-\bar{g}-1$  \\
 \hline
 \end{tabular}
 \end{center}
Finally, for the vertices $\{j^{\bar{l}}_{3}:1\leq \bar{l} \leq n\}$, the FTMC are
\begin{center}
 \begin{tabular}{|m{14.0em}|c|c|c|c|}
 \hline
  ${\ss}_{f}(j^{\bar{l}}_{3}|R_{f})$                             & $j^{1}_{1}$    & $j^{2}_{1}$  & $j^{\bar{g}+1}_{1}$   & $j^{\bar{g}+2}_{1}$\\
 \hline
 ${\ss}_{f}(j^{\bar{l}}_{3}|R_{f})$:($\bar{l}=1$)                & $\bar{l}+1$    & $2$       & $\bar{g}-\bar{l}+1$   & $\bar{g}-\bar{l}+2$ \\
 \hline
 ${\ss}_{f}(j^{\bar{l}}_{3}|R_{f})$:($2\leq \bar{l}\leq \bar{g}-1$) & $\bar{l}+1$ & $\bar{l}$ & $\bar{g}-\bar{l}+1$   & $\bar{g}-\bar{l}+2$  \\
 \hline
 ${\ss}_{f}(j^{\bar{l}}_{3}|R_{f})$:($\bar{l}=\bar{g}$)          & $\bar{l}+1$     & $\bar{l}$      & $2$             & $\bar{g}-\bar{l}+2$ \\
 \hline
 ${\ss}_{f}(j^{\bar{l}}_{3}|R_{f})$:($\bar{l}=\bar{g}+1$)        & $2\bar{g}-\bar{l}+1$  & $\bar{l}+2$    & $\bar{l}-\bar{g}+1$   & $2$  \\
 \hline
 ${\ss}_{f}(j^{\bar{l}}_{3}|R_{f})$:($\bar{g}+2\leq \bar{l}\leq 2\bar{g}-1$) & $2\bar{g}-\bar{l}+1$  & $2\bar{g}-\bar{l}+2$ & $\bar{l}-\bar{g}+1$   & $\bar{l}-\bar{g}$  \\
 \hline
 ${\ss}_{f}(j^{\bar{l}}_{3}|R_{f})$:($\bar{l}=2\bar{g}$)           & $2$   & $2\bar{g}-\bar{l}+2$ & $\bar{l}-\bar{g}+1$   & $\bar{l}-\bar{g}$  \\
 \hline
 \end{tabular}
 \end{center}
From these codes, we see that no two elements in $V(\mathbb{A}_{n})$ have the same FTMC, implying $R_{f}$ to be a resolving set for $\mathbb{A}_{n}$. Since, by definition of FTRS, the subsets $R_{f}\smallsetminus \{j\}$, $\forall$ $j\in R_{f}$ are $R_{1}=\{j^{1}_{1}, j^{2}_{1}, j^{\bar{g}+1}_{1}\}$, $R_{2}=\{j^{1}_{1}, j^{2}_{1}, j^{\bar{g}+2}_{1}\}$, $R_{3}=\{j^{1}_{1}, j^{\bar{g}+1}_{1}, j^{\bar{g}+2}_{1}\}$, and $R_{4}=\{j^{2}_{1}, j^{\bar{g}+1}_{1}, j^{\bar{g}+2}_{1}\}$. To prove that the set $R_{f}$ is the FTRS for $\mathbb{A}_{n}$, we have to show that the sets $R_{1}$, $R_{2}$, $R_{3}$, and $R_{4}$ are the resolving sets for $\mathbb{A}_{n}$. Then from the FTMC, as shown above, we find that the sets $R_{1}$, $R_{2}$, $R_{3}$, and $R_{4}$ are also resolving sets for $\mathbb{A}_{n}$, as the metric coordinates for every distinct pair of vertices of $\mathbb{A}_{n}$ are different with respect to the sets $R_{1}$, $R_{2}$, $R_{3}$, and $R_{4}$. This implies $fdim(\mathbb{A}_{n})\leq 4$. Thus, from these lines, equation (1), and proposition 2 we have $fdim(\mathbb{A}_{n})= 4$, in this case.\\
\textbf{Case(\rom{2})} $n\equiv 1(mod\ 2)$\\
For this, we set $n=2\bar{g}+1$, $\bar{g} \in \mathbb{N}$, and $\bar{g}\geq3$. Suppose $R_{f}=\{j^{1}_{1}, j^{2}_{1}, j^{\bar{g}+1}_{1}, j^{\bar{g}+3}_{1}\}\subseteq V(\mathbb{A}_{n})$. Next, we give FTMC to every vertex of $\mathbb{A}_{n}$ with respect to $R_{f}$. \\
For the vertices $\{j^{\bar{l}}_{1}:1\leq \bar{l} \leq n\}$, the FTMC are
\begin{center}
 \begin{tabular}{|m{14.0em}|c|c|c|c|}
 \hline
 ${\ss}_{f}(j^{\bar{l}}_{1}|R_{f})$                            & $j^{1}_{1}$    & $j^{2}_{1}$    & $j^{\bar{g}+1}_{1}$   & $j^{\bar{g}+3}_{1}$\\
 \hline
 ${\ss}_{f}(j^{\bar{l}}_{1}|R_{f})$:($\bar{l}=1$)              & $\bar{l}-1$    & $1$            & $\bar{g}-\bar{l}+1$   & $\bar{g}-1$ \\
 \hline
 ${\ss}_{f}(j^{\bar{l}}_{1}|R_{f})$:($\bar{l}=2$)              & $\bar{l}-1$    & $\bar{l}-2$    & $\bar{g}-\bar{l}+1$   & $\bar{g}$ \\
 \hline
 ${\ss}_{f}(j^{\bar{l}}_{1}|R_{f})$:($3\leq \bar{l}\leq \bar{g}+1$)& $\bar{l}-1$ & $\bar{l}-2$    & $\bar{g}-\bar{l}+1$   & $\bar{g}-\bar{l}+3$  \\
 \hline
 ${\ss}_{f}(j^{\bar{l}}_{1}|R_{f})$:($\bar{l}=\bar{g}+2$) & $2\bar{g}-\bar{l}+2$ & $\bar{l}-2$ & $\bar{l}-\bar{g}-1$ & $\bar{g}-\bar{l}+3$  \\
 \hline
 ${\ss}_{f}(j^{\bar{l}}_{1}|R_{f})$:($\bar{l}=\bar{g}+3$) & $2\bar{g}-\bar{l}+2$& $2\bar{g}-\bar{l}+3$ & $\bar{l}-\bar{g}-1$& $\bar{g}-\bar{l}+3$  \\
 \hline
 ${\ss}_{f}(j^{\bar{l}}_{1}|R_{f})$:($\bar{g}+4\leq \bar{l}\leq 2\bar{g}+1$) & $2\bar{g}-\bar{l}+2$ & $2\bar{g}-\bar{l}+3$ & $\bar{l}-\bar{g}-1$   & $\bar{l}-\bar{g}-3$  \\
 \hline
 \end{tabular}
 \end{center}
For the vertices $\{j^{\bar{l}}_{2}:1\leq \bar{l} \leq n\}$, the FTMC are
\begin{center}
 \begin{tabular}{|m{14.0em}|c|c|c|c|}
 \hline
 ${\ss}_{f}(j^{\bar{l}}_{2}|R_{f})$                              & $j^{1}_{1}$    & $j^{2}_{1}$    & $j^{\bar{g}+1}_{1}$ & $j^{\bar{g}+3}_{1}$\\
 \hline
 ${\ss}_{f}(j^{\bar{l}}_{2}|R_{f})$:($\bar{l}=1$)                & $\bar{l}$      & $1$            & $\bar{g}-\bar{l}+1$ & $\bar{g}$ \\
 \hline
 ${\ss}_{f}(j^{\bar{l}}_{2}|R_{f})$:($2\leq \bar{l}\leq \bar{g}$)& $\bar{l}$      & $\bar{l}-1$    & $\bar{g}-\bar{l}+1$ & $\bar{g}-\bar{l}+3$  \\
 \hline
 ${\ss}_{f}(j^{\bar{l}}_{2}|R_{f})$:($\bar{l}=\bar{g}+1$)        & $2\bar{g}-\bar{l}+2$ & $\bar{l}-1$& $\bar{l}-\bar{g}$   & $\bar{g}-\bar{l}+3$  \\
 \hline
 ${\ss}_{f}(j^{\bar{l}}_{2}|R_{f})$:($\bar{l}=\bar{g}+2$)  & $2\bar{g}-\bar{l}+2$ & $2\bar{g}-\bar{l}+3$ & $\bar{l}-\bar{g}$& $\bar{g}-\bar{l}+3$  \\
 \hline
 ${\ss}_{f}(j^{\bar{l}}_{2}|R_{f})$:($\bar{g}+3\leq \bar{l}\leq 2\bar{g}+1$) & $2\bar{g}-\bar{l}+2$ & $2\bar{g}-\bar{l}+3$ & $\bar{l}-\bar{g}$   & $\bar{l}-\bar{g}-2$  \\
 \hline
 \end{tabular}
 \end{center}
Finally, for the vertices $\{j^{\bar{l}}_{3}:1\leq \bar{l} \leq n\}$, the FTMC are
\begin{center}
 \begin{tabular}{|m{14.0em}|c|c|c|c|}
 \hline
  ${\ss}_{f}(j^{\bar{l}}_{3}|R_{f})$               & $j^{1}_{1}$     & $j^{2}_{1}$ & $j^{\bar{g}+1}_{1}$   & $j^{\bar{g}+3}_{1}$\\
 \hline
 ${\ss}_{f}(j^{\bar{l}}_{3}|R_{f})$:($\bar{l}=1$)  & $\bar{l}+1$     & $2$         & $\bar{g}-\bar{l}+1$   & $\bar{g}+1$ \\
 \hline
 ${\ss}_{f}(j^{\bar{l}}_{3}|R_{f})$:($2\leq \bar{l}\leq \bar{g}-1$)  & $\bar{l}+1$ & $\bar{l}$  & $\bar{g}-\bar{l}+1$ & $\bar{g}-\bar{l}+3$  \\
 \hline
 ${\ss}_{f}(j^{\bar{l}}_{3}|R_{f})$:($\bar{l}=\bar{g}$)              & $\bar{l}+1$     & $\bar{l}$      & $2$             & $\bar{g}-\bar{l}+3$ \\
 \hline
 ${\ss}_{f}(j^{\bar{l}}_{3}|R_{f})$:($\bar{l}=\bar{g}+1$)            & $2\bar{g}-\bar{l}+2$  & $\bar{l}$ & $\bar{l}-\bar{g}+1$   & $\bar{g}-\bar{l}+3$  \\
 \hline
 ${\ss}_{f}(j^{\bar{l}}_{3}|R_{f})$:($\bar{l}=\bar{g}+1$)            & $2\bar{g}-\bar{l}+2$  & $2\bar{g}-\bar{l}+3$ & $\bar{l}-\bar{g}+1$   & $2$  \\
 \hline
 ${\ss}_{f}(j^{\bar{l}}_{3}|R_{f})$:($\bar{g}+2\leq \bar{l}\leq 2\bar{g}$)& $2\bar{g}-\bar{l}+2$  & $2\bar{g}-\bar{l}+3$ & $\bar{l}-\bar{g}+1$   & $\bar{l}-\bar{g}-1$  \\
 \hline
 ${\ss}_{f}(j^{\bar{l}}_{3}|R_{f})$:($\bar{l}=2\bar{g}+1$)             & $2$ & $2\bar{g}-\bar{l}+3$ & $\bar{g}+1$ & $\bar{l}-\bar{g}-1$  \\
 \hline
 \end{tabular}
 \end{center}
From these codes, we see that no two elements in $V(\mathbb{A}_{n})$ have the same FTMC, implying $R_{f}$ to be a resolving set for $\mathbb{A}_{n}$. Since, by definition of FTRS, the subsets $R_{f}\smallsetminus \{j\}$, $\forall$ $j\in R_{f}$ are $R_{1}=\{j^{1}_{1}, j^{2}_{1}, j^{\bar{g}+1}_{1}\}$, $R_{2}=\{j^{1}_{1}, j^{2}_{1}, j^{\bar{g}+3}_{1}\}$, $R_{3}=\{j^{1}_{1}, j^{\bar{g}+1}_{1}, j^{\bar{g}+3}_{1}\}$, and $R_{4}=\{j^{2}_{1}, j^{\bar{g}+1}_{1}, j^{\bar{g}+3}_{1}\}$. To prove that the set $R_{f}$ is the FTRS for $\mathbb{A}_{n}$, we have to show that the sets $R_{1}$, $R_{2}$, $R_{3}$, and $R_{4}$ are the resolving sets for $\mathbb{A}_{n}$. Then from the FTMC, as shown above, we find that the sets $R_{1}$, $R_{2}$, $R_{3}$, and $R_{4}$ are also resolving sets for $\mathbb{A}_{n}$, as the metric coordinates for every distinct pair of vertices of $\mathbb{A}_{n}$ are different with respect to the sets $R_{1}$, $R_{2}$, $R_{3}$, and $R_{4}$. This implies $fdim(\mathbb{A}_{n})\leq 4$. Thus, from these lines, equation (1), and proposition 2 we have $fdim(\mathbb{A}_{n})= 4$, as well in this case.
\end{proof}

\begin{cor}
The FTMD for the double antiprism $\mathbb{A}_{n}$ is constant.
\end{cor}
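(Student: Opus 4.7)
The plan is to deduce this corollary as an immediate one-line consequence of Theorem 1. Theorem 1 asserts that $fdim(\mathbb{A}_n) = 4$ for every integer $n \geq 6$. Since the right-hand side does not depend on $n$, the function $n \mapsto fdim(\mathbb{A}_n)$ is constant on its domain $\{n \in \mathbb{N} : n \geq 6\}$, which is exactly what the corollary claims. Formally, one writes: by Theorem 1, for every $n \geq 6$ we have $fdim(\mathbb{A}_n) = 4$, and since $4$ is independent of $n$, the fault-tolerant metric dimension of $\mathbb{A}_n$ is constant on the relevant range.

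Since the entire quantitative work has already been carried out in the proof of Theorem 1 for both parity classes $n \equiv 0 \pmod{2}$ and $n \equiv 1 \pmod{2}$, no new construction, case analysis, or verification of the fault-tolerance property is required here. There is therefore no genuine obstacle: the corollary is merely a qualitative repackaging of the preceding theorem, highlighting that $\mathbb{A}_n$ belongs to the family of graphs whose FTMD does not grow with the order $3n$ of the graph. The only minor interpretive point is the convention that \emph{constant} in this context means \emph{independent of the parameter $n$}, which is clearly the intended reading.
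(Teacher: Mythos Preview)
Your proposal is correct and matches the paper's approach: the corollary is stated without proof in the paper, as it follows immediately from Theorem 1, which gives $fdim(\mathbb{A}_{n})=4$ for all $n\geq 6$. There is nothing to add; your reading of ``constant'' as ``independent of $n$'' is the intended one.
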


\hspace{-5.0mm}\textbf{{\Large $4$. Minimum Fault-Tolerant Number of $S_{n}$}}\\\\
In this section, we study some of the basic properties and the FTMD of $S_{n}$.
The graph $S_{n}$ consists of $4n$ vertices and $8n$ edges (see Fig. 5). The set of edges and vertices of $S_{n}$ are denoted separately by $E(S_{n})$ and $V(S_{n})$, where $E(S_{n})=\{j^{\bar{l}}_{r}j^{\bar{l+1}}_{r}, j^{\bar{l}}_{1}j^{\bar{l}}_{2}, j^{\bar{l}}_{2}j^{\bar{l}}_{3}, j^{\bar{l}}_{3}j^{\bar{l}}_{4}, j^{\bar{l}}_{3}j^{\bar{l}+1}_{2}:1 \leq r \leq 4; 1\leq \bar{l} \leq n\}$ and $V(S_{n})=\{j^{\bar{l}}_{r}:1 \leq r \leq 4;1 \leq \bar{l} \leq n\}$.
\begin{center}
  \begin{figure}[h!]
  \centering
  \includegraphics[width=2.5in]{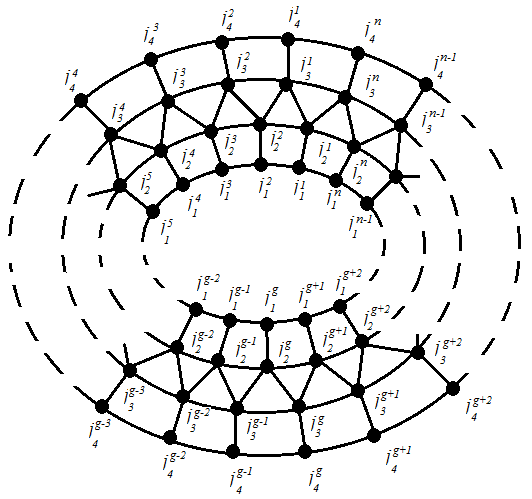}
  \caption{The Graph $S_{n}$}\label{p2}
\end{figure}
\end{center}
We call vertices $\{j^{\bar{l}}_{1}:1 \leq \bar{l} \leq n\}$, $\{j^{\bar{l}}_{2}:1 \leq \bar{l} \leq n\}$, $\{j^{\bar{l}}_{3}:1 \leq \bar{l} \leq n\}$, and $\{j^{\bar{l}}_{4}:1 \leq \bar{l} \leq n\}$ as the first, second, third, and fourth cycle vertices in $S_{n}$, respectively.  In the following result, we investigate the FTMD of $S_{n}$.

\begin{thm}
$fdim(S_{n})=4$, where $n\geq6$ is a positive integer.
\end{thm}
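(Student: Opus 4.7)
The plan is to prove $fdim(S_{n})=4$ by establishing matching upper and lower bounds. The lower bound $fdim(S_{n})\geq 4$ is immediate: Proposition 2 gives $dim(S_{n})=3$, and equation (1) then forces $fdim(S_{n})\geq 1+dim(S_{n})=4$.

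For the upper bound $fdim(S_{n})\leq 4$, I would mimic the landmark choice used in Theorem 1 for $\mathbb{A}_{n}$, taking all four resolving vertices from the first (innermost) cycle. Specifically, for $n=2\bar{g}$ even I would try $R_{f}=\{j^{1}_{1}, j^{2}_{1}, j^{\bar{g}+1}_{1}, j^{\bar{g}+2}_{1}\}$, and for $n=2\bar{g}+1$ odd the shifted choice $R_{f}=\{j^{1}_{1}, j^{2}_{1}, j^{\bar{g}+1}_{1}, j^{\bar{g}+3}_{1}\}$, exactly as in the proof for $\mathbb{A}_{n}$. The expectation is that the extra outer cycle $\{j^{\bar{l}}_{4}\}$ does not affect resolvability because each $j^{\bar{l}}_{4}$ is reached from any first-cycle vertex essentially through its unique attachment $j^{\bar{l}}_{3}$, so its code is offset by one from that of $j^{\bar{l}}_{3}$ in each coordinate, modulo boundary effects from the cycle edges $j^{\bar{l}}_{4}j^{\bar{l}+1}_{4}$.

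The bulk of the proof will be tabulating ${\ss}_{f}(j^{\bar{l}}_{r}\mid R_{f})$ for $r\in\{1,2,3,4\}$ and $1\leq \bar{l}\leq n$, breaking each row $r$ into sub-cases according to where $\bar{l}$ lies relative to the four landmark indices. The distances for rows $r=1,2,3$ should be essentially the same as in the $\mathbb{A}_{n}$ computation, since the subgraph of $S_{n}$ induced by layers $1$--$3$ is isomorphic to $\mathbb{A}_{n}$, and no geodesic from $j^{1}_{1}$ to a vertex in layers $1$--$3$ is shortened by detouring through layer $4$. For $r=4$, I would use $d(j^{a}_{1},j^{\bar{l}}_{4})=d(j^{a}_{1},j^{\bar{l}}_{3})+1$, verifying with $n\geq 6$ that no shortcut around the fourth cycle beats this (the only alternative routes go $j^{a}_{1}\to\cdots\to j^{\bar{l}\pm 1}_{3}\to j^{\bar{l}\pm 1}_{4}\to j^{\bar{l}}_{4}$, which is never shorter since layer $4$ offers no extra cross-edges back to the inner layers).

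After filling in these tables I would verify two things: first, that the four coordinates distinguish all $4n$ vertices, so that $R_{f}$ is a resolving set; second, that each of the four subsets $R_{i}=R_{f}\setminus\{v\}$ is still resolving, which makes $R_{f}$ a FTRS. The second step is a column-by-column cross-check, inheriting resolvability on layers $1$--$3$ from the $\mathbb{A}_{n}$ analysis, so that one only has to recheck that, after deleting any one coordinate, no fourth-cycle code collides with any other vertex's code. The main obstacle will be the bookkeeping of the fourth-cycle cases: since $j^{\bar{l}}_{4}$ lies one step further from every landmark than $j^{\bar{l}}_{3}$, there is a real risk that a triple ${\ss}_{f}(j^{\bar{l}}_{4}\mid R_{i})$ collides with ${\ss}_{f}(j^{\bar{l}'}_{3}\mid R_{i})$ for some nearby $\bar{l}'$, and I will need to check the boundary cases $\bar{l}\in\{1,2,\bar{g}+1,\bar{g}+2\}$ (and the odd-$n$ analogue) carefully, separately in the two parity cases. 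Combining the upper and lower bounds with Proposition 2 and equation (1) then yields $fdim(S_{n})=4$.
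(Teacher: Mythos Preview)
Your overall architecture---lower bound via Proposition~2 and inequality~(1), upper bound via an explicit four-vertex FTRS on the inner cycle, case split on parity---matches the paper exactly, and your even-case landmark set $R_f=\{j^{1}_{1},j^{2}_{1},j^{\bar g+1}_{1},j^{\bar g+2}_{1}\}$ is the paper's choice. (For odd $n$ the paper uses $j^{\bar g+2}_{1}$ rather than your $j^{\bar g+3}_{1}$; either may work, but you would have to redo the tables rather than re-use those from $T_n$ or $\mathbb{A}_n$.)

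There is, however, a genuine gap in your reduction step. The subgraph of $S_n$ induced by layers $1$--$3$ is \emph{not} isomorphic to $\mathbb{A}_n$: from the edge set $E(S_n)$ the only diagonal edges are $j^{\bar l}_{3}j^{\bar l+1}_{2}$, whereas $\mathbb{A}_n$ additionally has the diagonals $j^{\bar l}_{2}j^{\bar l+1}_{1}$ between layers~$1$ and~$2$. In $S_n$ layers~$1$ and~$2$ are joined only by the prism edges $j^{\bar l}_{1}j^{\bar l}_{2}$. Consequently your plan to ``inherit resolvability on layers $1$--$3$ from the $\mathbb{A}_n$ analysis'' fails: the layer-$2$ codes in $S_n$ do not agree with those in $\mathbb{A}_n$ (for instance $d_{S_n}(j^{1}_{1},j^{1}_{2})=1$ but $d_{S_n}(j^{2}_{1},j^{1}_{2})=2$, while in $\mathbb{A}_n$ both equal $1$), so copying those tables would give wrong entries and a false verification.

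The fix is actually simpler than the $\mathbb{A}_n$ computation, not harder. Because layer~$1$ meets the rest of $S_n$ only through the matching $j^{\bar l}_{1}j^{\bar l}_{2}$, one has ${\ss}_f(j^{\bar l}_{2}\mid R_f)={\ss}_f(j^{\bar l}_{1}\mid R_f)+(1,1,1,1)$ for every $\bar l$; likewise layer~$4$ is attached only via $j^{\bar l}_{3}j^{\bar l}_{4}$, giving ${\ss}_f(j^{\bar l}_{4}\mid R_f)={\ss}_f(j^{\bar l}_{3}\mid R_f)+(1,1,1,1)$. This is exactly what the paper exploits: only the layer-$1$ and layer-$3$ tables need to be computed from scratch, and the $(1,1,1,1)$ shift (which survives deletion of any one coordinate as a $(1,1,1)$ shift) makes the cross-layer collision checks for the three-element subsets routine.
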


\begin{proof}
To show that $fdim(S_{n})\leq 4$, we consider the following two cases i.e., $n\equiv 0(mod\ 2)$ and $n\equiv 1(mod\ 2)$.\\
\textbf{Case(\rom{1})} $n\equiv 0(mod\ 2)$\\
For this, we set $n=2\bar{g}$, $\bar{g} \in \mathbb{N}$ and $\bar{g}\geq3$. Suppose $R_{f}=\{j^{1}_{1}, j^{2}_{1}, j^{\bar{g}+1}_{1}, j^{\bar{g}+2}_{1}\} \subseteq V(S_{n})$. Next, we give FTMC to every vertex of $S_{n}$ with respect to $R_{f}$. \\
For the vertices $\{j^{\bar{l}}_{1}:1\leq \bar{l} \leq n\}$, the FTMC are
\begin{center}
 \begin{tabular}{|m{14.0em}|c|c|c|c|}
 \hline
 ${\ss}_{f}(j^{\bar{l}}_{1}|R_{f})$                            & $j^{1}_{1}$    & $j^{2}_{1}$    & $j^{\bar{g}+1}_{1}$   & $j^{\bar{g}+2}_{1}$\\
 \hline
 ${\ss}_{f}(j^{\bar{l}}_{1}|R_{f})$:($\bar{l}=1$)              & $\bar{l}-1$    & $1$            & $\bar{g}-\bar{l}+1$   & $\bar{g}-1$ \\
 \hline
 ${\ss}_{f}(j^{\bar{l}}_{1}|R_{f})$:($2\leq \bar{l}\leq \bar{g}+1$)  & $\bar{l}-1$ & $\bar{l}-2$ & $\bar{g}-\bar{l}+1$   & $\bar{g}-\bar{l}+2$  \\
 \hline
 ${\ss}_{f}(j^{\bar{l}}_{1}|R_{f})$                        & $j^{1}_{1}$    & $j^{2}_{1}$    & $j^{\bar{g}+1}_{1}$   & $j^{\bar{g}+2}_{1}$\\
 \hline
 ${\ss}_{f}(j^{\bar{l}}_{1}|R_{f})$:($\bar{l}=\bar{g}+2$)  & $2\bar{g}-\bar{l}+1$ & $\bar{l}-2$    & $\bar{l}-\bar{g}-1$   & $\bar{g}-\bar{l}+2$  \\
 \hline
 ${\ss}_{f}(j^{\bar{l}}_{1}|R_{f})$:($\bar{g}+3\leq \bar{l}\leq 2\bar{g}$) & $2\bar{g}-\bar{l}+1$ & $2\bar{g}-\bar{l}+2$ & $\bar{l}-\bar{g}-1$   & $\bar{l}-\bar{g}-2$  \\
 \hline
 \end{tabular}
 \end{center}
For the vertices $\{j^{\bar{l}}_{2}:1\leq \bar{l} \leq n\}$, the FTMC are ${\ss}_{f}(j^{\bar{l}}_{2}|R_{f})={\ss}_{f}(j^{\bar{l}}_{1}|R_{f})+(1,1,1,1)$, for $1\leq \bar{l} \leq n$. Next, for the vertices $\{j^{\bar{l}}_{3}:1\leq \bar{l} \leq n\}$, the FTMC are
\begin{center}
 \begin{tabular}{|m{14.0em}|c|c|c|c|}
 \hline
  ${\ss}_{f}(j^{\bar{l}}_{3}|R_{f})$               & $j^{1}_{1}$     & $j^{2}_{1}$    & $j^{\bar{g}+1}_{1}$   & $j^{\bar{g}+2}_{1}$\\
 \hline
 ${\ss}_{f}(j^{\bar{l}}_{3}|R_{f})$:($\bar{l}=1$)  & $\bar{l}+1$     & $2$            & $\bar{g}-\bar{l}+2$   & $\bar{g}+1$ \\
 \hline
 ${\ss}_{f}(j^{\bar{l}}_{3}|R_{f})$:($2\leq \bar{l}\leq \bar{g}$)    & $\bar{l}+1$    & $\bar{l}$  & $\bar{g}-\bar{l}+2$   & $\bar{g}-\bar{l}+3$  \\
 \hline
 ${\ss}_{f}(j^{\bar{l}}_{3}|R_{f})$:($\bar{l}=\bar{g}+1$)  & $2\bar{g}-\bar{l}+2$  & $\bar{l}$  & $\bar{l}-\bar{g}+1$   & $\bar{g}-\bar{l}+3$  \\
 \hline
 ${\ss}_{f}(j^{\bar{l}}_{3}|R_{f})$:($\bar{g}+2\leq \bar{l}\leq 2\bar{g}$)   & $2\bar{g}-\bar{l}+2$  & $2\bar{g}-\bar{l}+3$ & $\bar{l}-\bar{g}+1$   & $\bar{l}-\bar{g}$  \\
 \hline
\end{tabular}
\end{center}
Finally, for the vertices $\{j^{\bar{l}}_{4}:1\leq \bar{l} \leq n\}$, the FTMC are ${\ss}_{f}(j^{\bar{l}}_{4}|R_{f})={\ss}_{f}(j^{\bar{l}}_{3}|R_{f})+(1,1,1,1)$, for $1\leq \bar{l} \leq n$. From these codes, we see that no two elements in $V(S_{n})$ have the same FTMC, implying $R_{f}$ to be a resolving set for $S_{n}$. Since, by definition of FTRS, the subsets $R_{f}\smallsetminus \{j\}$, $\forall$ $j\in R_{f}$ are $R_{1}=\{j^{1}_{1}, j^{2}_{1}, j^{\bar{g}+1}_{1}\}$, $R_{2}=\{j^{1}_{1}, j^{2}_{1}, j^{\bar{g}+2}_{1}\}$, $R_{3}=\{j^{1}_{1}, j^{\bar{g}+1}_{1}, j^{\bar{g}+2}_{1}\}$, and $R_{4}=\{j^{2}_{1}, j^{\bar{g}+1}_{1}, j^{\bar{g}+2}_{1}\}$. To prove that the set $R_{f}$ is the FTRS for $S_{n}$, we have to show that the sets $R_{1}$, $R_{2}$, $R_{3}$, and $R_{4}$ are the resolving sets for $S_{n}$. Then from the FTMC, as shown above, we find that the sets $R_{1}$, $R_{2}$, $R_{3}$, and $R_{4}$ are also resolving sets for $S_{n}$, as the metric coordinates for every distinct pair of vertices of $S_{n}$ are different with respect to the sets $R_{1}$, $R_{2}$, $R_{3}$, and $R_{4}$. This implies $fdim(S_{n})\leq 4$. Thus, from these lines, equation (1), and proposition 2 we have $fdim(S_{n})= 4$, in this case.\\
\textbf{Case(\rom{2})} $n\equiv 1(mod\ 2)$\\
For this, we set $n=2\bar{g}+1$, $\bar{g} \in \mathbb{N}$ and $\bar{g}\geq3$. Suppose $R_{f}=\{j^{1}_{1}, j^{2}_{1}, j^{\bar{g}+1}_{1}, j^{\bar{g}+2}_{1}\}\subseteq V(S_{n})$. Next, we give FTMC to every vertex of $S_{n}$ with respect to $R_{f}$. \\
For the vertices $\{j^{\bar{l}}_{1}:1\leq \bar{l} \leq n\}$, the FTMC are
\begin{center}
 \begin{tabular}{|m{14.0em}|c|c|c|c|}
 \hline
 ${\ss}_{f}(j^{\bar{l}}_{1}|R_{f})$                           & $j^{1}_{1}$  & $j^{2}_{1}$ & $j^{\bar{g}+1}_{1}$   & $j^{\bar{g}+2}_{1}$\\
 \hline
 ${\ss}_{f}(j^{\bar{l}}_{1}|R_{f})$:($\bar{l}=1$)             & $\bar{l}-1$  & $1$         & $\bar{g}-\bar{l}+1$   & $\bar{g}$ \\
 \hline
 ${\ss}_{f}(j^{\bar{l}}_{1}|R_{f})$:($2\leq \bar{l}\leq \bar{g}+1$)  & $\bar{l}-1$    & $\bar{l}-2$    & $\bar{g}-\bar{l}+1$   & $\bar{g}-\bar{l}+2$  \\
 \hline
 ${\ss}_{f}(j^{\bar{l}}_{1}|R_{f})$:($\bar{l}=\bar{g}+2$)   & $2\bar{g}-\bar{l}+2$ & $\bar{l}-2$    & $\bar{l}-\bar{g}-1$   & $\bar{g}-\bar{l}+2$  \\
 \hline
 ${\ss}_{f}(j^{\bar{l}}_{1}|R_{f})$:($\bar{g}+3\leq \bar{l}\leq 2\bar{g}+1$) & $2\bar{g}-\bar{l}+2$ & $2\bar{g}-\bar{l}+3$ & $\bar{l}-\bar{g}-1$   & $\bar{l}-\bar{g}-2$  \\
 \hline
 \end{tabular}
 \end{center}
For the vertices $\{j^{\bar{l}}_{2}:1\leq \bar{l} \leq n\}$, the FTMC are ${\ss}_{f}(j^{\bar{l}}_{2}|R_{f})={\ss}_{f}(j^{\bar{l}}_{1}|R_{f})+(1,1,1,1)$, for $1\leq \bar{l} \leq n$. Next, for the vertices $\{j^{\bar{l}}_{3}:1\leq \bar{l} \leq n\}$, the FTMC are
\begin{center}
 \begin{tabular}{|m{14.0em}|c|c|c|c|}
 \hline
  ${\ss}_{f}(j^{\bar{l}}_{3}|R_{f})$                & $j^{1}_{1}$     & $j^{2}_{1}$    & $j^{\bar{g}+1}_{1}$   & $j^{\bar{g}+2}_{1}$\\
 \hline
 ${\ss}_{f}(j^{\bar{l}}_{3}|R_{f})$:($\bar{l}=1$)   & $\bar{l}+1$     & $2$            & $\bar{g}-\bar{l}+2$   & $\bar{g}-\bar{l}+3$ \\
 \hline
 ${\ss}_{f}(j^{\bar{l}}_{3}|R_{f})$:($2\leq \bar{l}\leq \bar{g}$)     & $\bar{l}+1$    & $\bar{l}$   & $\bar{g}-\bar{l}+2$ & $\bar{g}-\bar{l}+3$  \\
 \hline
 ${\ss}_{f}(j^{\bar{l}}_{3}|R_{f})$:($\bar{l}=\bar{g}+1$)  & $2\bar{g}-\bar{l}+3$  & $\bar{l}$    & $\bar{l}-\bar{g}+1$   & $\bar{g}-\bar{l}+3$  \\
 \hline
 ${\ss}_{f}(j^{\bar{l}}_{3}|R_{f})$:($\bar{g}+2\leq \bar{l}\leq 2\bar{g}+1$) & $2\bar{g}-\bar{l}+3$  & $2\bar{g}-\bar{l}+4$ & $\bar{l}-\bar{g}+1$   & $\bar{l}-\bar{g}$  \\
 \hline
\end{tabular}
\end{center}
Finally, for the vertices $\{j^{\bar{l}}_{4}:1\leq \bar{l} \leq n\}$, the FTMC are ${\ss}_{f}(j^{\bar{l}}_{4}|R_{f})={\ss}_{f}(j^{\bar{l}}_{3}|R_{f})+(1,1,1,1)$, for $1\leq \bar{l} \leq n$. From these codes, we see that no two elements in $V(S_{n})$ have the same FTMC, implying $R_{f}$ to be a resolving set for $S_{n}$. Since, by definition of FTRS, the subsets $R_{f}\smallsetminus \{j\}$, $\forall$ $j\in R_{f}$ are $R_{1}=\{j^{1}_{1}, j^{2}_{1}, j^{\bar{g}+1}_{1}\}$, $R_{2}=\{j^{1}_{1}, j^{2}_{1}, j^{\bar{g}+2}_{1}\}$, $R_{3}=\{j^{1}_{1}, j^{\bar{g}+1}_{1}, j^{\bar{g}+2}_{1}\}$, and $R_{4}=\{j^{2}_{1}, j^{\bar{g}+1}_{1}, j^{\bar{g}+2}_{1}\}$. To prove that the set $R_{f}$ is the FTRS for $S_{n}$, we have to show that the sets $R_{1}$, $R_{2}$, $R_{3}$, and $R_{4}$ are the resolving sets for $S_{n}$. Then from the FTMC, as shown above, we find that the sets $R_{1}$, $R_{2}$, $R_{3}$, and $R_{4}$ are also resolving sets for $S_{n}$, as the metric coordinates for every distinct pair of vertices of $S_{n}$ are different with respect to the sets $R_{1}$, $R_{2}$, $R_{3}$, and $R_{4}$. This implies $fdim(S_{n})\leq 4$. Thus, from these lines, equation (1), and proposition 2 we have $fdim(S_{n})= 4$, as well in this case.
\end{proof}

\begin{cor}
The FTMD for $S_{n}$ is constant.
\end{cor}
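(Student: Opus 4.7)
The plan is to derive this corollary as an immediate consequence of Theorem 2. That theorem asserts that $fdim(S_n) = 4$ for every integer $n \geq 6$, with a common witness (up to parity of $n$) of the form $R_f = \{j^{1}_{1}, j^{2}_{1}, j^{\bar{g}+1}_{1}, j^{\bar{g}+2}_{1}\}$. The right-hand side of this equality is a fixed positive integer which does not depend on the parameter $n$, so the function $n \mapsto fdim(S_n)$ is constant on the range $n \geq 6$, which is exactly the assertion of the corollary.

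Hence the proof reduces to a single invocation: by Theorem 2, $fdim(S_n) = 4$ for all $n \geq 6$, and since $4$ is independent of $n$, the FTMD of $S_n$ is constant. No separate case analysis, construction of resolving sets, or verification of fault-tolerant metric codes is required at this stage, because all of that work has already been carried out inside Theorem 2 for both the even case $n = 2\bar{g}$ and the odd case $n = 2\bar{g}+1$, together with the verification that each of the four three-element subsets $R_f \smallsetminus \{j\}$ remains a resolving set.

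There is no substantive obstacle anticipated, as this is a purely formal corollary paralleling Corollary 1 for the double antiprism $\mathbb{A}_n$. The only mild subtlety worth flagging is the implicit domain restriction $n \geq 6$: the constancy statement is understood on precisely the range where Theorem 2 is in force, and no claim is made for smaller values of $n$. Accordingly, my write-up would consist of one or two sentences citing Theorem 2 and drawing the constancy conclusion.
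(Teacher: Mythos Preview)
Your proposal is correct and matches the paper's treatment: the corollary is stated without proof immediately after the theorem establishing $fdim(S_n)=4$, so it is understood as an immediate consequence exactly as you describe. The only cosmetic point is that, because theorems and corollaries share a counter in this paper, the result you cite is numbered Theorem~3 (and the parallel for $\mathbb{A}_n$ is Corollary~2), not Theorem~2 and Corollary~1.
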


\hspace{-5.0mm}\textbf{{\Large $5$. Minimum Fault-Tolerant Number of $T_{n}$}}\\\\
In this section, we study some of the basic properties and the FTMD of $T_{n}$.
The graph $T_{n}$ consists of $4n$ vertices and $8n$ edges (see Fig. 6). The set of edges and vertices of $T_{n}$ are denoted separately by $E(T_{n})$ and $V(T_{n})$, where $E(T_{n})=\{j^{\bar{l}}_{r}j^{\bar{l+1}}_{r}, j^{\bar{l}}_{1}j^{\bar{l}}_{2}, j^{\bar{l}}_{2}j^{\bar{l}}_{3}, j^{\bar{l}}_{3}j^{\bar{l}}_{4}, j^{\bar{l}}_{2}j^{\bar{l}+1}_{1}, j^{\bar{l}}_{3}j^{\bar{l}+1}_{2}:r=1,2,4; 1\leq \bar{l} \leq n\}$ and $V(T_{n})=\{j^{\bar{l}}_{r}:1 \leq r \leq 4;1 \leq \bar{l} \leq n\}$.
\begin{center}
  \begin{figure}[h!]
  \centering
  \includegraphics[width=2.5in]{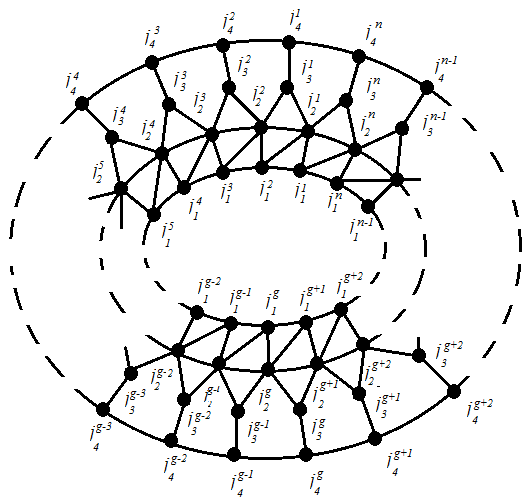}
  \caption{The Graph $T_{n}$}\label{p3}
\end{figure}
\end{center}
We call vertices $\{j^{\bar{l}}_{1}: 1 \leq \bar{l} \leq n\}$, $\{j^{\bar{l}}_{2}: 1 \leq \bar{l} \leq n\}$, and $\{j^{\bar{l}}_{4}:1 \leq \bar{l} \leq n\}$ as the first, second and fourth cycle vertices in $T_{n}$, respectively. The remaining vertices $\{j^{\bar{l}}_{3}: 1 \leq \bar{l} \leq n\}$, as the middle vertices in $T_{n}$. In the following result, we investigate the FTMD of $T_{n}$.

\begin{thm}
$fdim(T_{n})=4$, where $n\geq6$ is a positive integer.
\end{thm}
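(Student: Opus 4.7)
The plan is to replicate the two-step strategy used for $\mathbb{A}_n$ and $S_n$. Proposition 2 gives $dim(T_n) = 3$, so equation (1) immediately forces $fdim(T_n) \geq 4$, and the entire burden is to exhibit one explicit FTRS of size $4$. Guided by the two preceding theorems, the natural candidate is
\[
R_f = \{j^{1}_{1}, j^{2}_{1}, j^{\bar{g}+1}_{1}, j^{\bar{g}+2}_{1}\}
\]
when $n = 2\bar{g}$ is even, and a parity-shifted variant such as $R_f = \{j^{1}_{1}, j^{2}_{1}, j^{\bar{g}+1}_{1}, j^{\bar{g}+3}_{1}\}$ when $n = 2\bar{g}+1$ is odd. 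Both parities must be handled separately because the distances from the two ``antipodal'' landmarks differ by a constant depending on $n \bmod 2$.

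First I would fix the parity and, by unfolding the planar drawing of $T_n$, compute $d(j^{\bar{l}}_r, j^{1}_{1})$ as a piecewise-linear function of $\bar{l}$ for each $r \in \{1,2,3,4\}$. The inner cycle on $\{j^{\bar{l}}_1\}$, together with the antiprism-style shortcut edges $j^{\bar{l}}_2 j^{\bar{l}+1}_1$ and $j^{\bar{l}}_3 j^{\bar{l}+1}_2$, yields the same metric behaviour as in $\mathbb{A}_n$: distances grow linearly until the antipode and then decrease by symmetry. Because row $3$ hangs off row $2$ only via the edges $j^{\bar{l}}_2 j^{\bar{l}}_3$ and $j^{\bar{l}}_3 j^{\bar{l}+1}_2$, while row $4$ hangs off row $3$ only via the edges $j^{\bar{l}}_3 j^{\bar{l}}_4$, I expect shift identities of the form ${\ss}_f(j^{\bar{l}}_4 \mid R_f) = {\ss}_f(j^{\bar{l}}_3 \mid R_f) + (1,1,1,1)$ to hold, mirroring what was used explicitly for $S_n$. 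This reduces the tabulation to at most three of the four rows.

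Once the FTMC table is written, distinctness of codes is verified in three stages: within each row, strict monotonicity of the first coordinate separates the codes; across rows at the same $\bar{l}$, the shift identities produce a nonzero offset that rules out collisions; and the remaining cross-row, cross-index comparisons are eliminated by inspecting a single coordinate that breaks whatever residual symmetry survives. I then repeat the verification for each of the four three-element subsets $R_i = R_f \setminus \{v\}$, $v \in R_f$, confirming that each of $R_1, R_2, R_3, R_4$ remains a resolving set. Combining the upper bound $fdim(T_n)\le 4$ with equation~(1) and Proposition~2 yields $fdim(T_n)=4$.

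The main obstacle I anticipate is the odd-$n$ case: the asymmetric landmark choice creates two or three exceptional values of $\bar{l}$ near the antipode where the generic piecewise-linear distance formula is off by one, and those boundary rows must be recorded carefully so that, after deleting any single landmark, no two vertices of $T_n$ share a code. Because row $3$ is not itself cyclically connected (unlike in $S_n$), I also expect the distances to row $3$ to be the ones most sensitive to which of the two antipodal landmarks is retained, so the $R_3$ and $R_4$ checks are where the verification is most delicate; everything else is routine parity-by-parity bookkeeping of exactly the same flavour as in the previous two theorems.
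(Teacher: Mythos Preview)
Your proposal is correct and follows essentially the same route as the paper: the same lower bound via equation~(1) and Proposition~2, the same parity split with $R_f=\{j^{1}_{1},j^{2}_{1},j^{\bar g+1}_{1},j^{\bar g+2}_{1}\}$ for $n=2\bar g$ and $R_f=\{j^{1}_{1},j^{2}_{1},j^{\bar g+1}_{1},j^{\bar g+3}_{1}\}$ for $n=2\bar g+1$, the same shift identity ${\ss}_f(j^{\bar l}_4\mid R_f)={\ss}_f(j^{\bar l}_3\mid R_f)+(1,1,1,1)$, and the same verification that each $R_f\setminus\{v\}$ resolves. Your observation that the third layer is not cyclically connected (edges $j^{\bar l}_r j^{\bar l+1}_r$ occur only for $r\in\{1,2,4\}$) is exactly the structural wrinkle that distinguishes $T_n$ from $S_n$ and accounts for the extra boundary rows in the paper's $j^{\bar l}_3$ tables.
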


\begin{proof}
To show that $fdim(T_{n})\leq 4$, we consider the following two cases i.e., $n\equiv 0(mod\ 2)$ and $n\equiv 1(mod\ 2)$.\\
\textbf{Case(\rom{1})} $n\equiv 0(mod\ 2)$\\
For this, we set $n=2\bar{g}$, $\bar{g} \in \mathbb{N}$ and $\bar{g}\geq3$. Suppose $R_{f}=\{j^{1}_{1}, j^{2}_{1}, j^{\bar{g}+1}_{1}, j^{\bar{g}+2}_{1}\} \subseteq V(T_{n})$. Next, we give FTMC to every vertex of $T_{n}$ with respect to $R_{f}$. \\
For the vertices $\{j^{\bar{l}}_{1}:1\leq \bar{l} \leq n\}$, the FTMC are
\begin{center}
 \begin{tabular}{|m{14.0em}|c|c|c|c|}
 \hline
 ${\ss}_{f}(j^{\bar{l}}_{1}|R_{f})$                            & $j^{1}_{1}$    & $j^{2}_{1}$    & $j^{\bar{g}+1}_{1}$   & $j^{\bar{g}+2}_{1}$\\
 \hline
 ${\ss}_{f}(j^{\bar{l}}_{1}|R_{f})$:($\bar{l}=1$)              & $\bar{l}-1$    & $1$            & $\bar{g}-\bar{l}+1$   & $\bar{g}-1$ \\
 \hline
 ${\ss}_{f}(j^{\bar{l}}_{1}|R_{f})$:($2\leq \bar{l}\leq \bar{g}+1$)  & $\bar{l}-1$ & $\bar{l}-2$    & $\bar{g}-\bar{l}+1$& $\bar{g}-\bar{l}+2$  \\
 \hline
 ${\ss}_{f}(j^{\bar{l}}_{1}|R_{f})$:($\bar{l}=\bar{g}+2$)      & $2\bar{g}-\bar{l}+1$ & $\bar{l}-2$ & $\bar{l}-\bar{g}-1$& $\bar{g}-\bar{l}+2$  \\
 \hline
 ${\ss}_{f}(j^{\bar{l}}_{1}|R_{f})$:($\bar{g}+3\leq \bar{l}\leq 2\bar{g}$) & $2\bar{g}-\bar{l}+1$ & $2\bar{g}-\bar{l}+2$ & $\bar{l}-\bar{g}-1$   & $\bar{l}-\bar{g}-2$  \\
 \hline
 \end{tabular}
 \end{center}
For the vertices $\{j^{\bar{l}}_{2}:1\leq \bar{l} \leq n\}$, the FTMC are
\begin{center}
 \begin{tabular}{|m{14.0em}|c|c|c|c|}
 \hline
 ${\ss}_{f}(j^{\bar{l}}_{2}|R_{f})$                         & $j^{1}_{1}$   & $j^{2}_{1}$  & $j^{\bar{g}+1}_{1}$   & $j^{\bar{g}+2}_{1}$\\
 \hline
 ${\ss}_{f}(j^{\bar{l}}_{2}|R_{f})$:($\bar{l}=1$)           & $\bar{l}$     & $1$      & $\bar{g}-\bar{l}+1$   & $\bar{g}$ \\
 \hline
 ${\ss}_{f}(j^{\bar{l}}_{2}|R_{f})$:($2\leq \bar{l}\leq \bar{g}$)  & $\bar{l}$ & $\bar{l}-1$  & $\bar{g}-\bar{l}+1$   & $\bar{g}-\bar{l}+2$  \\
 \hline
 ${\ss}_{f}(j^{\bar{l}}_{2}|R_{f})$:($\bar{l}=\bar{g}+1$)          & $2\bar{g}-\bar{l}+1$ & $\bar{l}-1$ & $\bar{l}-\bar{g}$ & $\bar{g}-\bar{l}+2$  \\
 \hline
 ${\ss}_{f}(j^{\bar{l}}_{2}|R_{f})$:($\bar{g}+2\leq \bar{l}\leq 2\bar{g}$) & $2\bar{g}-\bar{l}+1$ & $2\bar{g}-\bar{l}+2$ & $\bar{l}-\bar{g}$   & $\bar{l}-\bar{g}-1$  \\
 \hline
 \end{tabular}
 \end{center}
For the vertices $\{j^{\bar{l}}_{3}:1\leq \bar{l} \leq n\}$, the FTMC are
\begin{center}
 \begin{tabular}{|m{14.0em}|c|c|c|c|}
 \hline
  ${\ss}_{f}(j^{\bar{l}}_{3}|R_{f})$                             & $j^{1}_{1}$     & $j^{2}_{1}$    & $j^{\bar{g}+1}_{1}$   & $j^{\bar{g}+2}_{1}$\\
 \hline
 ${\ss}_{f}(j^{\bar{l}}_{3}|R_{f})$:($\bar{l}=1$)                & $\bar{l}+1$     & $2$            & $\bar{g}-\bar{l}+1$   & $\bar{g}-\bar{l}+2$ \\
 \hline
 ${\ss}_{f}(j^{\bar{l}}_{3}|R_{f})$:($2\leq \bar{l}\leq \bar{g}-1$) & $\bar{l}+1$  & $\bar{l}$      & $\bar{g}-\bar{l}+1$   & $\bar{g}-\bar{l}+2$  \\
 \hline
 ${\ss}_{f}(j^{\bar{l}}_{3}|R_{f})$:($\bar{l}=\bar{g}$)             & $\bar{l}+1$     & $\bar{l}$      & $2$             & $\bar{g}-\bar{l}+2$  \\
 \hline
 ${\ss}_{f}(j^{\bar{l}}_{3}|R_{f})$:($\bar{l}=\bar{g}+1$)           & $2\bar{g}-\bar{l}+1$  & $2\bar{g}-\bar{l}+2$ & $\bar{l}-\bar{g}+1$   & $2$  \\
 \hline
 ${\ss}_{f}(j^{\bar{l}}_{3}|R_{f})$:($\bar{g}+2\leq \bar{l}\leq 2\bar{g}-1$) & $2\bar{g}-\bar{l}+1$  & $2\bar{g}-\bar{l}+2$ & $\bar{l}-\bar{g}+1$   & $\bar{l}-\bar{g}$  \\
 \hline
 ${\ss}_{f}(j^{\bar{l}}_{3}|R_{f})$:($\bar{l}=2\bar{g}$)              & $2$   & $2\bar{g}-\bar{l}+2$ & $\bar{l}-\bar{g}+1$   & $\bar{l}-\bar{g}$  \\
 \hline
\end{tabular}
\end{center}
Finally, for the vertices $\{j^{\bar{l}}_{4}:1\leq \bar{l} \leq n\}$, the FTMC are ${\ss}_{f}(j^{\bar{l}}_{4}|R_{f})={\ss}_{f}(j^{\bar{l}}_{3}|R_{f})+(1,1,1,1)$, for $1\leq \bar{l} \leq n$. From these codes, we see that no two elements in $V(T_{n})$ have the same FTMC, implying $R_{f}$ to be a resolving set for $T_{n}$. Since, by definition of FTRS, the subsets $R_{f}\smallsetminus \{j\}$, $\forall$ $j\in R_{f}$ are $R_{1}=\{j^{1}_{1}, j^{2}_{1}, j^{\bar{g}+1}_{1}\}$, $R_{2}=\{j^{1}_{1}, j^{2}_{1}, j^{\bar{g}+2}_{1}\}$, $R_{3}=\{j^{1}_{1}, j^{\bar{g}+1}_{1}, j^{\bar{g}+2}_{1}\}$, and $R_{4}=\{j^{2}_{1}, j^{\bar{g}+1}_{1}, j^{\bar{g}+2}_{1}\}$. To prove that the set $R_{f}$ is the FTRS for $T_{n}$, we have to show that the sets $R_{1}$, $R_{2}$, $R_{3}$, and $R_{4}$ are the resolving sets for $T_{n}$. Then from the FTMC, as shown above, we find that the sets $R_{1}$, $R_{2}$, $R_{3}$, and $R_{4}$ are also resolving sets for $T_{n}$, as the metric coordinates for every distinct pair of vertices of $T_{n}$ are different with respect to the sets $R_{1}$, $R_{2}$, $R_{3}$, and $R_{4}$. This implies $fdim(T_{n})\leq 4$. Thus, from these lines, equation (1), and proposition 2 we have $fdim(T_{n})= 4$, in this case.\\
\textbf{Case(\rom{2})} $n\equiv 1(mod\ 2)$\\
For this, we set $n=2\bar{g}+1$, $\bar{g} \in \mathbb{N}$ and $\bar{g}\geq3$. Suppose $R_{f}=\{j^{1}_{1}, j^{2}_{1}, j^{\bar{g}+1}_{1}, j^{\bar{g}+3}_{1}\}\subseteq V(T_{n})$. Next, we give FTMC to every vertex of $T_{n}$ with respect to $R_{f}$. \\
For the vertices $\{j^{\bar{l}}_{1}:1\leq \bar{l} \leq n\}$, the FTMC are
\begin{center}
 \begin{tabular}{|m{14.0em}|c|c|c|c|}
 \hline
 ${\ss}_{f}(j^{\bar{l}}_{1}|R_{f})$                              & $j^{1}_{1}$    & $j^{2}_{1}$    & $j^{\bar{g}+1}_{1}$   & $j^{\bar{g}+3}_{1}$\\
 \hline
 ${\ss}_{f}(j^{\bar{l}}_{1}|R_{f})$:($\bar{l}=1$)                & $\bar{l}-1$    & $1$            & $\bar{g}-\bar{l}+1$   & $\bar{g}$ \\
 \hline
 ${\ss}_{f}(j^{\bar{l}}_{1}|R_{f})$:($2\leq \bar{l}\leq \bar{g}+1$) & $\bar{l}-1$ & $\bar{l}-2$    & $\bar{g}-\bar{l}+1$   & $\bar{g}-\bar{l}+2$  \\
 \hline
 ${\ss}_{f}(j^{\bar{l}}_{1}|R_{f})$:($\bar{l}=\bar{g}+2$)        & $2\bar{g}-\bar{l}+2$ & $\bar{l}-2$ & $\bar{l}-\bar{g}-1$& $\bar{g}-\bar{l}+2$  \\
 \hline
 ${\ss}_{f}(j^{\bar{l}}_{1}|R_{f})$:($\bar{g}+3\leq \bar{l}\leq 2\bar{g}+1$) & $2\bar{g}-\bar{l}+2$ & $2\bar{g}-\bar{l}+3$ & $\bar{l}-\bar{g}-1$   & $\bar{l}-\bar{g}-2$  \\
 \hline
 \end{tabular}
 \end{center}
For the vertices $\{j^{\bar{l}}_{2}:1\leq \bar{l} \leq n\}$, the FTMC are
\begin{center}
 \begin{tabular}{|m{14.0em}|c|c|c|c|}
 \hline
 ${\ss}_{f}(j^{\bar{l}}_{2}|R_{f})$                              & $j^{1}_{1}$    & $j^{2}_{1}$    & $j^{\bar{g}+1}_{1}$ & $j^{\bar{g}+3}_{1}$\\
 \hline
 ${\ss}_{f}(j^{\bar{l}}_{2}|R_{f})$:($\bar{l}=1$)                & $\bar{l}$      & $1$            & $\bar{g}-\bar{l}+1$ & $\bar{g}$ \\
 \hline
 ${\ss}_{f}(j^{\bar{l}}_{2}|R_{f})$:($2\leq \bar{l}\leq \bar{g}$)& $\bar{l}$      & $\bar{l}-1$    & $\bar{g}-\bar{l}+1$ & $\bar{g}-\bar{l}+3$  \\
 \hline
 ${\ss}_{f}(j^{\bar{l}}_{2}|R_{f})$:($\bar{l}=\bar{g}+1$)        & $2\bar{g}-\bar{l}+2$ & $\bar{l}-1$& $\bar{l}-\bar{g}$   & $\bar{g}-\bar{l}+3$  \\
 \hline
 ${\ss}_{f}(j^{\bar{l}}_{2}|R_{f})$:($\bar{l}=\bar{g}+2$)        & $2\bar{g}-\bar{l}+2$ & $2\bar{g}-\bar{l}+3$ & $\bar{l}-\bar{g}$   & $\bar{g}-\bar{l}+3$  \\
 \hline
 ${\ss}_{f}(j^{\bar{l}}_{2}|R_{f})$:($\bar{g}+3\leq \bar{l}\leq 2\bar{g}+1$) & $2\bar{g}-\bar{l}+2$ & $2\bar{g}-\bar{l}+3$ & $\bar{l}-\bar{g}$   & $\bar{l}-\bar{g}-2$  \\
 \hline
 \end{tabular}
 \end{center}
For the vertices $\{j^{\bar{l}}_{3}:1\leq \bar{l} \leq n\}$, the FTMC are
\begin{center}
 \begin{tabular}{|m{14.0em}|c|c|c|c|}
 \hline
 ${\ss}_{f}(j^{\bar{l}}_{3}|R_{f})$                                   & $j^{1}_{1}$ & $j^{2}_{1}$ & $j^{\bar{g}+1}_{1}$   & $j^{\bar{g}+3}_{1}$\\
 \hline
 ${\ss}_{f}(j^{\bar{l}}_{3}|R_{f})$:($\bar{l}=1$)                     & $\bar{l}+1$ & $2$         & $\bar{g}-\bar{l}+1$   & $\bar{g}-\bar{l}+2$ \\
 \hline
 ${\ss}_{f}(j^{\bar{l}}_{3}|R_{f})$:($2\leq \bar{l}\leq \bar{g}-1$)   & $\bar{l}+1$ & $\bar{l}$   & $\bar{g}-\bar{l}+1$   & $\bar{g}-\bar{l}+2$  \\
 \hline
 ${\ss}_{f}(j^{\bar{l}}_{3}|R_{f})$:($\bar{l}=\bar{g}$)               & $\bar{l}+1$ & $\bar{l}$   & $2$                   & $\bar{g}-\bar{l}+2$  \\
 \hline
 ${\ss}_{f}(j^{\bar{l}}_{3}|R_{f})$:($\bar{l}=\bar{g}+1$)             & $2\bar{g}-\bar{l}+2$ & $\bar{l}$  & $\bar{l}-\bar{g}+1$   & $2$  \\
 \hline
 ${\ss}_{f}(j^{\bar{l}}_{3}|R_{f})$:($\bar{g}+2\leq \bar{l}\leq 2\bar{g}$) & $2\bar{g}-\bar{l}+2$  & $2\bar{g}-\bar{l}+3$ & $\bar{l}-\bar{g}+1$   & $\bar{l}-\bar{g}$  \\
 \hline
 ${\ss}_{f}(j^{\bar{l}}_{3}|R_{f})$:($\bar{l}=2\bar{g}+1$)            & $2$  & $2\bar{g}-\bar{l}+3$ & $\bar{g}+1$         & $\bar{l}-\bar{g}$  \\
 \hline
\end{tabular}
\end{center}
Finally, for the vertices $\{j^{\bar{l}}_{4}:1\leq \bar{l} \leq n\}$, the FTMC are ${\ss}_{f}(j^{\bar{l}}_{4}|R_{f})={\ss}_{f}(j^{\bar{l}}_{3}|R_{f})+(1,1,1,1)$, for $1\leq \bar{l} \leq n$. From these codes, we see that no two elements in $V(T_{n})$ have the same FTMC, implying $R_{f}$ to be a resolving set for $T_{n}$. Since, by definition of FTRS, the subsets $R_{f}\smallsetminus \{j\}$, $\forall$ $j\in R_{f}$ are $R_{1}=\{j^{1}_{1}, j^{2}_{1}, j^{\bar{g}+1}_{1}\}$, $R_{2}=\{j^{1}_{1}, j^{2}_{1}, j^{\bar{g}+2}_{1}\}$, $R_{3}=\{j^{1}_{1}, j^{\bar{g}+1}_{1}, j^{\bar{g}+2}_{1}\}$, and $R_{4}=\{j^{2}_{1}, j^{\bar{g}+1}_{1}, j^{\bar{g}+2}_{1}\}$. To prove that the set $R_{f}$ is the FTRS for $T_{n}$, we have to show that the sets $R_{1}$, $R_{2}$, $R_{3}$, and $R_{4}$ are the resolving sets for $T_{n}$. Then from the FTMC, as shown above, we find that the sets $R_{1}$, $R_{2}$, $R_{3}$, and $R_{4}$ are also resolving sets for $T_{n}$, as the metric coordinates for every distinct pair of vertices of $T_{n}$ are different with respect to the sets $R_{1}$, $R_{2}$, $R_{3}$, and $R_{4}$. This implies $fdim(T_{n})\leq 4$. Thus, from these lines, equation (1), and proposition 2 we have $fdim(T_{n})= 4$, as well in this case.
\end{proof}

\begin{cor}
The FTMD for $T_{n}$ is constant.
\end{cor}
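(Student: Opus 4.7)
The plan is to observe that this corollary is an immediate consequence of the preceding theorem, which established that $fdim(T_{n}) = 4$ for every positive integer $n \geq 6$. Since the value $4$ does not depend on the parameter $n$, the function $n \mapsto fdim(T_{n})$ on the domain $\{n \in \mathbb{N} : n \geq 6\}$ is a constant function, which is precisely the assertion of the corollary.

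More concretely, I would structure the argument in two short steps. First, I would recall the theorem just proved: for each $n \geq 6$, regardless of the parity of $n$, we exhibited a fault-tolerant resolving set $R_{f} \subseteq V(T_{n})$ of cardinality $4$ (namely $\{j^{1}_{1}, j^{2}_{1}, j^{\bar{g}+1}_{1}, j^{\bar{g}+2}_{1}\}$ in the even case and $\{j^{1}_{1}, j^{2}_{1}, j^{\bar{g}+1}_{1}, j^{\bar{g}+3}_{1}\}$ in the odd case), which together with the lower bound $fdim(T_{n}) \geq 1 + dim(T_{n}) = 4$ coming from equation (1) and Proposition~2 gave equality. Second, I would note that since neither the upper bound nor the lower bound depends on $n$, the value $fdim(T_{n}) = 4$ is identical for every $n$ in the admissible range, and therefore $fdim(T_{n})$ is constant as a function of $n$.

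There is essentially no technical obstacle here; the only thing worth emphasizing, for clarity, is the meaning of the word \emph{constant} in this setting, namely that $fdim(T_{n})$ is independent of the order parameter $n$ rather than growing with the size of the graph. This distinguishes the behaviour of $T_{n}$ from graph families whose FTMD is unbounded, and it is the substantive content that the corollary is drawing attention to.
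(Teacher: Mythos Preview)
Your proposal is correct and matches the paper's approach: the corollary is stated without proof in the paper precisely because it is an immediate consequence of the preceding theorem establishing $fdim(T_{n})=4$ for all $n\geq 6$. Your observation that the value $4$ is independent of $n$ is exactly the intended reading.
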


\hspace{-5.0mm}\textbf{{\Large $6$. Conclusion}}\\\\
We investigated the presence of independent fault-tolerant resolving sets in graphs and obtained some basic results comparing the independence of the resolving sets in this paper. We proved that $fdim(\mathbb{A}_{n})=fdim(S_{n})=fdim(T_{n})=4$, for the double antiprism $\mathbb{A}_{n}$, two convex polytopes $S_{n}$, and $T_{n}$. We end this section by posing a question as an open problem regarding IFTRS that derives naturally from the article. \\\\
\textbf{Open Problem:} {\it Is ifr$(\mathbb{A}_{n})=$ifr$(S_{n})=$ifr$(T_{n})=4$ ?}

\end{document}